\newlength{\originalbase}
\newcommand{\spacing}[1]{\setlength{\baselineskip}{#1\originalbase}}
\begin{document} 
\addtolength{\parskip}{6 pt}
\setlength{\parindent}{0 pt}
\spacing{1.1}

\newtheorem{theorem}{Theorem}
\newtheorem{proposition}[theorem]{Proposition}
\newtheorem{lemma}[theorem]{Lemma}
\newtheorem{corollary}[theorem]{Corollary}
\theoremstyle{definition}
\newtheorem{definition}[theorem]{Definition}
\newtheorem{example}[theorem]{Example}
\newtheorem{remark}[theorem]{Remark}
\newtheorem{exercise}[theorem]{Exercise}
\newcommand{\RR}{\mathbb{R}}
\newcommand{\QQ}{\mathbb{Q}}
\newcommand{\CC}{\mathbb{C}}
\newcommand{\ZZ}{\mathbb{Z}}
\newcommand{\NN}{\mathbb{N}}
\newcommand{\xx}{\mathbf{x}}
\newcommand{\vv}{\mathbf{v}}
\newcommand{\ww}{\mathbf{w}}
\newcommand{\kk}{\mathbf{k}}
\newcommand{\aaa}{\mathbf{a}}
\newcommand{\bbb}{\mathbf{b}}
\newcommand{\jjj}{\alpha}
\newcommand{\MM}{\mathcal{M}}
\newcommand{\lcm}{\mathrm{lcm}}
\newcommand{\var}{\mathrm{Var}}
\newcommand{\ord}{\mathrm{ord}}
\newcommand{\ndiv}{\nmid}
\newcommand{\nequiv}{\not\equiv}
\renewcommand{\qed}{\ding{113}}
\newcommand{\finial}{$$\mbox{\ding{167}}$$}

\input amssym.def
 
\input amssym.tex

\noindent
\thispagestyle{empty}
\begin{center}	  	 
{\large\bf What's for dessert?}\\[3mm]
{Tanya Khovanova and Daniel A.~Klain}
\end{center}
\vspace{6mm}

\section{Coffee House}

Dan and Tanya met in a coffeehouse and decided to have dessert. Both were watching their calories, so they decided to share.  
They would like to find a dessert that they will both enjoy, and to do so quickly, with a minimum of negotiation or calculation.
How should they choose which one? 

Tanya prefers pannacotta, and Dan prefers vanilla ice cream.  They can flip a coin, but Tanya hates vanilla ice cream and won't touch it. So this algorithm is not very good.

They can rank all the available desserts and pick the one with the smallest sum of ranks; call this the \textit{brute-force algorithm}.\footnote{This brute-force method is also known as {\em Borda Count} \cite{borda}\cite[p.~43]{saari}\cite[p.~65]{Szpiro}.}  We call the resulting dessert \textit{the optimal choice}. For example, if the chocolate mousse is Tanya's second choice and Dan's third choice, then the sum of its ranks is $5$. If the pannacotta is Tanya's first choice and Dan's second choice, then its sum of ranks is $3$. If you remember, Dan's first choice is vanilla ice cream, which is not even on Tanya's radar, making the pannacotta the optimal choice.

Let's have a formal discussion. Suppose the number of desserts is $N$. We will also assume that Tanya's and Dan's preferences are random and independent. What is the expected sum of ranks for the optimal choice? A calculation leads to the sequence A129591 in the OEIS \cite{OEIS}. For each permutation $p$ of numbers $1$ through $N$, define $M(p)$ as the minimum $p(i) + i$ over all indices $i$. Then A129591($N$) is the sum of $M(p)$ of all $p$. The sequence starts as $2$, $5$, $17$, $75$, $407$, $2619$, $19487$, $164571$, $1555007$, $16252779$. The closed formula for this sequence is 
\[\sum_{i=0}^{N-1} (N-i+1)i!((i+1)^{N-i}-i^{N-i}).\]

To get the expected minimum sum of ranks, we need to divide A129591($N$) by $N!$, leading to the sequence:
\begin{align}\label{borda}
2,\;\; 2.5,\;\; 2.8333,\;\;3.125,\;\; 3.3917,\;\; 3.6375,\;\; 3.8665,\;\; 4.0816, \;\; 4.2852,\;\; 4.4788, \;\; \ldots
\end{align}
and so on.
For example, if there are two desserts: $N=2$, then with probability $\frac{1}{2}$, we have the same preferences, and our minimum sum of ranks is $2$. Otherwise, our minimum sum of ranks is $3$. So the expected value is $2.5$.

However, the brute-force algorithm has many practical problems. For starters, it takes a long time and doesn't suit the ambiance of the coffee shop. It is not only about time. It is annoying to rank all the desserts.  While most people can easily identify their top and bottom choices, putting the middle choices in order feels like a waste of time.

The brute-force algorithm also can produce a tie, although this is actually a minor issue.

An example illustrates a more serious issue. 

\begin{example}
\label{example:badbf}
Without loss of generality, assume that the ranks for Dan are 1, 2, 3, 4, 5, and 6. Suppose the ranks for Tanya are 5, 6, 4, 3, 2, and 1, correspondingly.   
\end{example}

The brute-force algorithm will select (1,5), since this has the lowest sum. The result is very unfair to Tanya: it is her second to last choice. Although this algorithm guarantees that the total rank is not more than $N+1$, since this is the average total rank, it could still be very unattractive for Dan or Tanya.

If fairness is a priority, one might suggest another algorithm: to pick the dessert with the same ranks for both people or as close to the same rank as possible.

\begin{example}
\label{example:badfair}
We again assume that the ranks for Dan are 1, 2, 3, 4, 5, and 6. Suppose the ranks for Tanya are 5, 1, 2, 3, 4, and 6.   
\end{example}

Minimizing the difference for fairness is really bad here, yielding (6,6), which is miserable for both Dan and Tanya.

\section{The Better Algorithm}

In order to make a choice that is fair to both parties, Tanya suggests a much simpler algorithm: Dan reveals his top $K$ desserts from the menu. Tanya will then choose one of those $K$ desserts to share.

Consider Example~\ref{example:badbf}. Suppose $K=3$, so that this method results in ranks (3,4).  This is not so bad for either Dan or Tanya.

Now we formally explain what ``not so bad'' means. In the worst case for Dan, he would get his $K$th choice. We will soon see that the optimal $K$ is on the order of the square root of $N$, so Dan should be satisfied with his dessert. The worst rank for Tanya is $(N-K+1)$, which is worse than Dan's for optimal $K$. However, the probability of this happening is really low: $\frac{1}{\binom{N}{K}}$. It could happen that both Dan and Tanya are unlucky and get their worst choices possible with this algorithm. 
But this happens with a small probability:
\[\frac{1}{K\binom{N}{K}}.\]
Even in this worst case scenario the total rank is $N+1$, which is no worse than the average random choice. 

We expect this algorithm to work much better in most instances. But first, let's see how we should choose the value $K$.

\section{How should we choose $K$?}

Which value of $K$ provides the best result?

Consider the extreme cases. Suppose $K = 1$. Then Dan gets his first choice, and the expected minimum sum of ranks is $1 + \frac{N+1}{2}$. This matches the optimal choice for $1$ or $2$ desserts. Otherwise, it is worse than the optimal choice. On the other hand, suppose $K = N$. Then Tanya gets her first choice, and the expected sum of ranks is $1 + \frac{N+1}{2}$ again.  The best $K$ is likely to be somewhere in the middle. 

Suppose $N=3$ and $K = 2$. Then Dan gets his first choice with probability $\frac{1}{2}$ and the second with the same probability. So his expected rank is $\frac{3}{2}$. Tanya gets her first choice with a probability of $\frac{2}{3}$ and her second choice with a probability of $\frac{1}{3}$. So her expected rank is $\frac{4}{3}$. So our expected minimum sum of ranks is $\frac{17}{6}$. Surprisingly, it is the same as the best (see the sequence~(\ref{borda}) above). We are not this lucky for larger $N$, as we will soon see.

So what value of $K$ is closest to the optimal?

We can assume that Dan and Tanya are chosen randomly from a population that is uniformly distributed with all possible tastes. The problem now simplifies to the following:  Dan chooses a random subset of size $K$ from the set $\{1,2,\ldots, n\}$ (the Tanya rankings), after which Tanya chooses the minimum of that set.

Let $T$ denote the rank of the chosen dessert according to Tanya, and let $D$ denote the rank of that dessert according to Dan.

The sample space of Dan's choices consists of all possible subsets of size $K$.  The event that Tanya's final choice  has rank $d$ requires we choose $K-1$ more elements from the higher values $\{d+1,d+2, \ldots, N\}$. The probability that Tanya would rank the chosen dessert with value $d$ is then given by
\begin{align}
P_T(d) = \frac{\binom{N-d}{K-1}}{\binom{N}{K}}.
\label{tanya}
\end{align}

The following combinatorial identity is helpful for the analysis of the random variable $T$  (see also \cite[p.~121]{vanlint}).
\begin{proposition} For integers $N \geq 0$ and $0 \leq a, b \leq N$, 
\begin{align*}
\sum_{d=0}^N \binom{d}{a}\binom{N-d}{b} \;=\; \binom{N+1}{a+b+1}.
\end{align*}
\label{comb-id}
\end{proposition}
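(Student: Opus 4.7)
The plan is to prove this identity by a direct double-counting argument. The right-hand side $\binom{N+1}{a+b+1}$ counts the $(a+b+1)$-element subsets of the ground set $\{0, 1, \ldots, N\}$, which has $N+1$ elements. I would then partition this collection of subsets according to the value $d$ of the $(a+1)$-th smallest element of each subset, and show that the number of subsets associated to each $d$ is precisely the $d$-th summand on the left-hand side.

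Once $d$ is fixed, a subset $S$ of size $a+b+1$ whose $(a+1)$-th smallest element equals $d$ is determined by independently choosing the $a$ elements of $S$ strictly below $d$ and the $b$ elements strictly above $d$. The former come from $\{0, 1, \ldots, d-1\}$, a $d$-element set, contributing $\binom{d}{a}$ choices; the latter come from $\{d+1, d+2, \ldots, N\}$, which has $N-d$ elements, contributing $\binom{N-d}{b}$ choices. Hence the count of such subsets is exactly $\binom{d}{a}\binom{N-d}{b}$.

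Summing over $d \in \{0, 1, \ldots, N\}$ then gives the total. Outside the effective range $a \leq d \leq N - b$ at least one of the two binomial coefficients vanishes, so extending the sum to all $d \in \{0, 1, \ldots, N\}$ costs nothing and reproduces exactly the form on the left-hand side.

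I do not anticipate any serious obstacle. The only subtlety is bookkeeping: one must check that the convention of labeling the $(a+1)$-th smallest element by $d$ matches the binomial coefficients as written, and that no off-by-one shift is introduced. An alternative route is to extract the coefficient of $x^{a+b+1}$ on both sides of the generating function identity $(1+x)^{N+1} = (1+x)^{d} \cdot (1+x)^{N+1-d}$ summed appropriately, but the bijective argument above is both shorter and more transparent.
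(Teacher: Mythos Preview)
Your argument is correct and is essentially the same as the paper's: both prove the identity by counting $(a+b+1)$-element subsets of an $(N+1)$-element set according to the position of their $(a+1)$-st smallest element. The only cosmetic difference is that the paper uses the ground set $\{1,\ldots,N+1\}$ with pivot $d+1$, while you use $\{0,\ldots,N\}$ with pivot $d$.
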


\begin{proof} There are $\binom{N+1}{a+b+1}$
ways to choose a subset of size $a+b+1$ from the set $\{1,2,\ldots, N+1\}$. 
Meanwhile, there are $\binom{d}{a}\binom{N-d}{b}$ ways to choose this subset 
so that its $(a+1)$st element is $d+1$.  \sloppy
\end{proof}

To compute the expected value of Tanya's rank, observe that
\begin{align} \notag
E(T) &\;=\; \sum_{d=1}^{N-K+1} dP_T(d) 
\;=\; \sum_{d=1}^{N-K+1} d\binom{N-d}{K-1}\binom{N}{K}^{-1} \\[1mm] \notag
&\;=\; \binom{N}{K}^{-1} \sum_{d=0}^{N} \binom{d}{1}\binom{N-d}{K-1} \quad \hbox{(the extra terms are all zero)}\\[1mm] \notag
&\;=\; \binom{N}{K}^{-1} \binom{N+1}{K+1}   \quad \hbox{(set $a=1$ and $b=K-1$ in Proposition~\ref{comb-id})}\\[1mm]
&\;=\; \frac{N+1}{K+1}.
\label{etanya}
\end{align}

Meanwhile, since Dan's tastes are uniformly random relative to Tanya's (and vice versa),
Tanya is equally likely to favor any of Dan's initial $K$ choices. Therefore,
Dan will rank the chosen dessert at each value $d \in \{1, \ldots, K \}$ with the same probability
$$P_D(d) \;=\; \frac{1}{K},$$so that
\begin{align}
E(D) \;=\; \frac{K+1}{2}.
\label{edan}
\end{align}

As we can see, Tanya should prefer bigger $K$, while Dan should prefer smaller $K$.

The expected total rank is now given by
$$E(T+D) \;=\; E(T)+E(D) \;=\; \frac{N+1}{K+1} +\frac{K+1}{2}.$$

\begin{theorem} The expected total rank is minimized when $K = \sqrt{2N+2}-1$.  Moreover, this choice of $K$ results in equal expected ranks for each participant so that
$E(T) = E(D)= \sqrt{\frac{N+1}{2}}$.
\label{opt2}
\end{theorem}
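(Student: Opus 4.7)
The plan is to minimize the closed-form expression
\[
E(T+D) \;=\; \frac{N+1}{K+1} + \frac{K+1}{2}
\]
already derived from (\ref{etanya}) and (\ref{edan}), treated as a function of $K$ for $K > 0$.

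The cleanest route I would take is the AM-GM inequality. The two summands $\frac{N+1}{K+1}$ and $\frac{K+1}{2}$ are positive, and their product is $\frac{N+1}{2}$, which is independent of $K$. Hence AM-GM yields
\[
E(T+D) \;\geq\; 2\sqrt{\tfrac{N+1}{2}} \;=\; \sqrt{2N+2},
\]
with equality if and only if the two summands coincide. Setting $\frac{N+1}{K+1} = \frac{K+1}{2}$ and solving gives $(K+1)^2 = 2N+2$, hence $K = \sqrt{2N+2} - 1$, as claimed.

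Substituting this value back into (\ref{etanya}) and (\ref{edan}) simultaneously yields $E(T) = E(D) = \sqrt{\frac{N+1}{2}}$, which settles the second half of the theorem in one line. As a sanity check I would mention the calculus alternative: differentiating $f(K) = \frac{N+1}{K+1} + \frac{K+1}{2}$ gives $f'(K) = -\frac{N+1}{(K+1)^2} + \frac{1}{2}$, which vanishes at the same $K$, and $f''(K) = \frac{2(N+1)}{(K+1)^3} > 0$ confirms the critical point is a minimum.

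There is no serious obstacle here; the main subtlety worth flagging in a remark is that the optimizer $\sqrt{2N+2}-1$ is generically irrational, so in practice Dan and Tanya would round to a nearby integer $K$. But since the theorem as stated treats $K$ as a continuous parameter, the AM-GM argument delivers both the optimal value and the equality $E(T)=E(D)$ in one stroke, and no further work is needed.
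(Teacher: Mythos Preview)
Your proof is correct and matches the paper's approach essentially line for line: both observe that the product $E(T)E(D)=\frac{N+1}{2}$ is independent of $K$ and then apply the AM--GM inequality to conclude that the minimum of $E(T)+E(D)$ occurs at $K=\sqrt{2N+2}-1$ with $E(T)=E(D)=\sqrt{(N+1)/2}$. Your added calculus sanity check and remark about rounding are extras the paper defers to later sections, but they do no harm here.
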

In other words, minimizing expected total rank also optimizes {\em fairness,} by minimizing the
absolute difference $|E(T)-E(D)|$ as well.
\begin{proof} From~(\ref{etanya}) and~(\ref{edan}) we have
$$E(T)E(D) \;=\; \frac{N+1}{2},$$
for all choices of $K$.
It then follows from the AM-GM inequality \cite[p. 20]{Steele} that
$$E(T)+E(D) \;\geq \; 2\sqrt{E(T)E(D)} \;=\; \sqrt{2N+2},$$
with equality if and only if $E(T)=E(D)$.  In this optimal equality case, we have
$$\frac{N+1}{K+1} = \frac{K+1}{2},$$
so that $(K+1)^2 = 2N+2$, and $K = \sqrt{2N+2}-1$.
\end{proof}

\section{Rounding Errors}

As $N$ varies from $1$ to $10$ in the formula $K = \sqrt{2N+2}-1$ from Theorem~\ref{opt2}, 
we obtain the following set of values for $K$:
\[1.,\ 1.44949,\ 1.82843,\ 2.16228,\ 2.4641,\ 2.74166,\ 3.,\ 3.24264,\ 3.47214,\ 3.69042.\]

{\bf Remark:} During the real-life coffeehouse encounter that inspired this article, Tanya actually suggested that Dan choose $3$ desserts from a menu of $7$ choices.  We see now that her instincts were on the mark.

In order to maximize the total satisfaction of the parties, Dan should choose $\sqrt{2N+2}-1$ desserts, after which Tanya will indicate her favorite from this group. The problem is that $\sqrt{2N+2}-1$ is usually {\em not an integer}.  

Since we need an integer for this algorithm, we should choose either the floor or the ceiling of $\sqrt{2N+2}-1$. Let $K^\bullet$ denote the integer choice for $K$ that provides the best expected total rank. Here are the first $10$ values of $K^\bullet$, where we use large bold symbols to emphasize the cases when the number is not an approximation, but already an exact integer value:

\renewcommand{\arraystretch}{1.2}
{\centering 
\begin{tabular}{l|c|c|c|c|c|c|c|c|c|c|}
Menu size $N$ \; & \; $1$ & $2$ & $3$ & $4$ & $5$ & $6$ & $7$ & $8$ & $9$ & $10$\\ \hline
Optimal integer $K^\bullet$ \; & \; {\large $\mathbf{1}$} & $1$ or $2$ & $2$ & $2$ & $2$ or $3$ & $3$ & {\large $\mathbf{3}$} & $3$ & $3$ or $4$ & $4$ 
\end{tabular} .
\par}\vspace{2mm}

There is a tie when $N$ is one less than a triangular number: $N +1 = \frac{m(m+1)}{2}= T_m$, where we denote the $m$th triangular number by $T_m$. 
In this case, both $K^\bullet = m-1$ and $K^\bullet = m$ produce the same total rank $m + \frac{1}{2}$.

So, to find the value $K^\bullet$, Dan and Tanya should recall triangular numbers, 
find an index $m$ such that $T_{m-1} < N+1 \leq T_m$, and set $K^\bullet = m-1$.  See Figure~\ref{tri-num}.

\begin{theorem}  
The {\bf integer} number of choices for Dan that optimizes total expected satisfaction for Tanya and Dan; that is,
minimizes $E(T)+E(D)$, is $$K^\bullet = m-1,$$
where $T_{m-1} < N+1 \leq T_m$.

Moreover, using $K^\bullet$ initial choices also assures that
$$|E(T)-E(D)| \leq \frac{1}{2}.$$
\label{approx}
\end{theorem}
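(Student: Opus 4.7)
The plan is to treat $f(K) := E(T)+E(D) = \frac{N+1}{K+1}+\frac{K+1}{2}$ as a function of the \emph{integer} variable $K \in \{1,\ldots,N\}$ and study its discrete first difference. Since the continuous minimizer from Theorem~\ref{opt2} is $\sqrt{2N+2}-1$, one expects $f$ to be unimodal in $K$, so locating the sign change of $f(K+1)-f(K)$ pinpoints the optimum.

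First I would compute
$$f(K+1) - f(K) \;=\; \frac{1}{2} - \frac{N+1}{(K+1)(K+2)},$$
which is non-negative precisely when $(K+1)(K+2) \geq 2(N+1)$, i.e.\ when $T_{K+1} \geq N+1$. Letting $m$ be the unique integer satisfying $T_{m-1} < N+1 \leq T_m$, this inequality first becomes true at $K+1=m$, so $f$ is strictly decreasing up to $K=m-1$ and non-decreasing thereafter. Hence $K^\bullet = m-1$ is optimal, and the boundary case $N+1 = T_m$ produces the stated tie between $K = m-1$ and $K = m$, with both yielding the same value $m + \tfrac{1}{2}$.

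For the fairness bound, I would evaluate the signed gap $E(T)-E(D) = \frac{N+1}{K+1} - \frac{K+1}{2}$ at $K^\bullet = m-1$, obtaining $\frac{N+1}{m} - \frac{m}{2}$. Dividing the defining inequalities $T_{m-1} < N+1 \leq T_m$ by $m$ yields $\frac{m-1}{2} < \frac{N+1}{m} \leq \frac{m+1}{2}$, and subtracting $\frac{m}{2}$ then gives $-\frac{1}{2} < E(T)-E(D) \leq \frac{1}{2}$, as required.

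I do not anticipate any serious obstacle: once the difference $f(K+1)-f(K)$ is rewritten so that the triangular number $T_{K+1} = \binom{K+2}{2}$ appears, both claims follow from chasing the inequality $T_{m-1} < N+1 \leq T_m$. The only care needed is verifying that the tie case ($N+1 = T_m$) aligns with equality in $f(K+1) \geq f(K)$, which is exactly the same condition, so the two parts of the theorem share a single underlying computation.
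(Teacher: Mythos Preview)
Your proposal is correct and the fairness argument is identical to the paper's. For the optimality of $K^\bullet=m-1$, your route differs slightly: you analyze the full discrete first difference $f(K+1)-f(K)=\tfrac{1}{2}-\tfrac{N+1}{(K+1)(K+2)}$ and locate the sign change at $T_{K+1}=N+1$, thereby proving discrete unimodality directly. The paper instead first bounds $\sqrt{2N+2}$ to show that the two integers nearest the continuous optimizer are $m-2$ and $m-1$, and then compares only $E_{m-2}$ with $E_{m-1}$ via essentially the same difference you computed. Your version is arguably cleaner, since it does not rely (implicitly) on the convexity of the continuous function $f$ to justify restricting attention to the two nearest integers; it establishes the global integer minimum in one stroke and makes the tie case $N+1=T_m$ fall out of the same inequality.
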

In other words, while rounding to $K^\bullet$ does offer an expected advantage to either Dan or Tanya, this advantage does not exceed
$0.5$ in expectation.

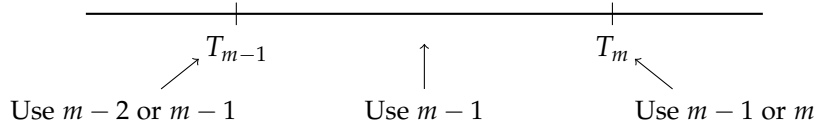
\begin{figure}[ht]
\begin{tikzpicture}[scale=1]
\draw[color=white] (0,-2)--(0,1);
\draw[thick] (0,0)--(9,0);
\draw (2,-0.15)--(2,0.15);
\node [below] at (2,-0.15) {\small $T_{m-1}$};
\draw[->] (1,-1)--(1.5,-0.6);
\node [below] at (0.5,-1) {\footnotesize Use $m-2$ or $m-1$};
\draw (7,-0.15)--(7,0.15);
\node [below] at (7,-0.15) {\small $T_{m}$};
\draw[->] (7.8,-1)--(7.3,-0.6);
\node [below] at (8.5,-1) {\footnotesize Use $m-1$ or $m$};
\draw[->] (4.5,-1)--(4.5,-0.4);
\node [below] at (4.5,-1) {\footnotesize Use $m-1$};
\end{tikzpicture}
\caption{$K^\bullet$ is the largest index of any triangular number less than $N+1$} 
\label{tri-num}
\end{figure}

\begin{proof}[Proof of Theorem~\ref{approx}] 
Suppose that $T_{m-1} < N+1 \leq T_m$.  We then have 
$$m-1 < \sqrt{m(m-1)} < \sqrt{2N+2} \leq \sqrt{m(m+1)} < m+ \frac{1}{2}.$$ 
Therefore, the nearest integers to $K = \sqrt{2N+2}-1$ are $m-2$ and $m-1$.
Using $m-1$ choices yields total expected satisfaction
$$E_{m-1} = E(T)+E(D) = \frac{N+1}{m}+\frac{m}{2}$$
while using  $m-2$ choices yields
$$E_{m-2} = E(T)+E(D) = \frac{N+1}{m-1}+\frac{m-1}{2}.$$
Observe that
$$E_{m-2}-E_{m-1} 
\;=\; (N+1)\left( \frac{1}{m-1}-\frac{1}{m}\right) - \frac{1}{2}
\;=\; (N+1)\left( \frac{1}{2T_{m-1}}\right) - \frac{1}{2} 
\; > \; 0,$$
so that $E_{m-1} <E_{m-2}$. 
Hence, $K^\bullet = m-1$ is the better of the two choices. 

Moreover, after setting $K^\bullet = m-1$, we have
$$E(T)-E(D) \;=\; \frac{N+1}{m}-\frac{m}{2}.$$
Since $T_{m-1} < N+1 \leq T_m$, we have 
$$-\frac{1}{2}  \;=\; \frac{T_{m-1}}{m}-\frac{m}{2} \;<\; E(T)-E(D) \;\leq\;\frac{T_m}{m}-\frac{m}{2} \;=\; \frac{1}{2},$$
so that
$$|E(T)-E(D)| \leq \frac{1}{2}.$$
\end{proof}

{\bf Remark:}  A routine algebraic computation shows that $T_{m-1} \leq N+1 \leq T_m$ if and only if 
$$m = \Big\lfloor \frac{\sqrt{8N+9}+1}{2} \Big\rfloor,$$
leading to the alternative formula
$$K^\bullet = \Big\lfloor \frac{\sqrt{8N+9}-1}{2} \Big\rfloor.$$

For menu sizes $N$ up to $43$ items, the optimal number $K^\bullet$ of initial choices for Dan is given in the following table.\\

\renewcommand{\arraystretch}{1.1}
{\centering 
\begin{tabular}{l|cccccccc}
Menu size $N$ \; & \; $1$ & $2$--$4$ & $5$--$8$ & $9$--$13$ & $14$--$19$ & $20$--$26$ & $27$--$34$ & $35$--$43$\\ \hline
Optimal integer $K^\bullet$ \; & \; $1$& $2$& $3$& $4$& $5$& $6$& $7$& $8$
\end{tabular}
\par}\vspace{1mm}

\section{How effective is the algorithm?}

How far from optimal is the expected total rank using our algorithm? 

In Table~\ref{table:expectation} below we review the expected total rank, $E(T)+E(D)$, resulting from the brute force (Borda count) method (recall the sequence~(\ref{borda})), along with the expectation using the ideal optimal choice value $K = \sqrt{2N+2}-1$ (typically an impractical non-integer), 
and the integer approximation $K^\bullet$ recommended by Theorem~\ref{approx}.

\begin{table}[ht!]
\centering
\begin{tabular}{l|llllllllll}
Menu size $N$: \;& $1$&$2$&$3$&$4$&$5$&$6$&$7$&$8$&$9$&$10$\\ \hline
brute force& $2$& $2.5$& $2.833$& $3.125$& $3.392$& $3.638$& $3.866$& $4.082$& $4.285$& $4.479$\\[1mm]
using ideal $K$& $2$& $2.449$& $2.828$& $3.162$& $3.464$& $3.742$& $4$& $4.243$& $4.472$& $4.690$\\[1mm]
using integer $K^{\bullet}$& $2$& $2.5$& $2.833$& $3.167$& $3.5$& $3.75$& $4$& $4.25$& $4.5$& $4.7$
\end{tabular}\\[4mm]
\caption{Expectation of total rank, $E(T)+E(D)$, for various algorithms}
\label{table:expectation}
\end{table}

We can see from the table above that, for a realistic number of desserts on the menu, the algorithm gives a decent approximation to the optimal choice.

\section{Absolute Difference and Variance}

Theorem~\ref{opt2} asserts that, when $K+1 = \sqrt{2N+2}$, we have $E(T) = E(D)$, suggesting that this choice of $K$ optimizes fairness (and that the integer approximation $K^\bullet$ approximates fairness, as in Theorem~\ref{approx}).  But this is a little bit misleading.

The law of large numbers asserts that, since $E(T) = E(D)$, Dan and Tanya would have equal average satisfaction in the {\em long run}, after many meetings, in which, moreover, Dan's tastes are randomly shuffled each time.  This is not a useful measure of fairness, since we don't go out for dessert that often!  We would like to know instead how $D$ and $T$ are likely to compare on a short-run basis.

To illustrate the difficulty further, suppose $T = 10$ and $D=1$ at one meeting, while $T=1$ and $D=10$ on the next occasion.  Each has the same average satisfaction, in spite of one person being very disappointed each time.

To address this concern, we would like to know how close $T-D$ is likely to be to its expected value $E(T-D) = 0$.  One way to measure this is by computing the variance 
$$\var(T-D) = E\Big[ \Big((T-D)-E(T-D)\Big)^2 \Big]$$ 
and the standard deviation
$\sigma_{T-D} = \sqrt{\var(T-D)}$.  

After setting $K = \sqrt{2N+2}-1$, we have $E(T-D)=0$, so that $$\var(T-D) = E\Big((T-D)^2\Big).$$
In this case the root mean square inequality \cite[p.~36]{Steele} implies 
$$E(|T-D|) \;\leq\; \sqrt{E(|T-D|^2)} \;=\; \sigma_{T-D}.$$
A small upper bound on $\sigma_{T-D}$ would then suggest that $T-D$ is more likely close to zero, rather than merely cancelling out large positive and negative deviations over the long term.

To this end, observe that, since $T$ and $D$ are independent, 
\begin{align*}
E\Big((T-D)^2\Big) \;=\; E\Big( T^2-2TD+D^2 \Big)
\;=\; E( T^2 )+E(D^2)-2E(T)E(D).
\end{align*}

Formulas for $E(T)$ and $E(D)$ were given by~(\ref{etanya}) and~(\ref{edan}), respectively.
Since $D$ is uniform on $\{1,2,\ldots, K\}$,
$$E(D^2) \;=\; \frac{1}{K} \Big( 1^2 + 2^2 + \cdots + K^2 \Big) \;=\; \frac{(K+1)(2K+1)}{6}.$$

To compute $E(T^2)$, observe that 
$$d^2 \;=\; 2 \, \frac{d^2-d}{2} + d \;=\; 2\binom{d}{2} + \binom{d}{1},$$
and recall that $P_T$ is given by~(\ref{tanya}), so that
\begin{align*}
E(T^2) &\;=\; \sum_{d=1}^{N-K+1} P_T(d) d^2
\;=\; \sum_{d=1}^{N-K+1} \binom{N}{K}^{-1} \binom{N-d}{K-1} d^2 \\[1mm]
&\;=\; \binom{N}{K}^{-1} \sum_{d=1}^{N}  \binom{N-d}{K-1}  \left[  2\binom{d}{2} + \binom{d}{1} \right]  \hbox{\quad (the extra terms are all zero) } \\[1mm]
&\;=\; \binom{N}{K}^{-1} \left[  2\binom{N+1}{K+2} + \binom{N+1}{K+1} \right]  \hbox{\quad (after two applications of Proposition~\ref{comb-id}) } \\[1mm]
&\;=\; \frac{K!(N-K)!}{N!} \left[  \frac{2(N+1)!}{(K+2)!(N-K-1)!} +  \frac{(N+1)!}{(K+1)!(N-K)!} \right]  \\[1mm]
&\;=\; \frac{(N+1)(2N-K+2)}{(K+1)(K+2)}. 
\end{align*}

It now follows that
\begin{align*}
E\Big((T-D)^2\Big) 
&\;=\; E( T^2 )+E(D^2)-2E(T)E(D)\\[1mm]
&\;=\; \frac{(N+1)(2N-K+2)}{(K+1)(K+2)}+\frac{(K+1)(2K+1)}{6}-2\frac{N+1}{K+1}\frac{K+1}{2}\\[1mm]
&\;=\; \frac{(N+1)(2N-K+2)}{(K+1)(K+2)}+\frac{(K+1)(2K+1)}{6}-(N+1),
\end{align*}

Setting $K+1 = \sqrt{2N+2}$ now yields
\begin{align*}
E\Big((T-D)^2\Big) 
&\;=\; \frac{(N+1)(2N+3-\sqrt{2N+2})}{\sqrt{2N+2}(\sqrt{2N+2}+1)}+\frac{\sqrt{2N+2}(2\sqrt{2N+2}-1)}{6}-(N+1),\\[1mm]
&\;=\; \frac{(N+1)(2N+3-\sqrt{2N+2})}{(2N+2)+\sqrt{2N+2}}+\frac{2N+2}{3}-\frac{\sqrt{2N+2}}{6}-(N+1)\\[1mm]
&\;<\; \frac{2}{3}(N+1), 
\end{align*}
whence
$$E(|T-D|) \;\leq\; \sigma_{T-D} \; < \; \sqrt{\frac{2}{3}(N+1)}.$$
For example, given a menu of $12$ items, we have  $E(|T-D|) < 3$.

\section{Larger groups of diners}

Suppose instead that a party of $3$ or more graduate students on a very tight budget are sharing one dessert,
which comes from a menu of $N$ different options.  
Let $s$ be the number of people in the group. As before, the desserts will be ranked from top choice (\#$1$) down to the bottom choice (\#N) independently by each person.  Once again, we assume that each person's ranking of different desserts is a uniformly random permutation, independent of the others.

In order to find a dessert for everyone to share,
the first person will be asked to choose $C_1$ dessert options from the menu.  The second person will then choose $C_2$ from that shorter list, and so on, until the last person chooses $C_s=1$ dessert for everyone to share. This sequence of narrowing choices can be visualized as shown:
$$N \longrightarrow C_1 \longrightarrow \cdots \longrightarrow C_{i-i}\longrightarrow C_i\longrightarrow C_{i+1}\longrightarrow \cdots
\longrightarrow C_s = 1.$$

The final dessert chosen will have some (likely different) rank according to each person at the table.
Let $R_i$ denote the rank of the final dessert choice according to the $i$th person.  In order to make a fair choice, we would like the expected values $E(R_i)$ to be as close together as possible.

Recall that the {\em order statistics} of a finite sequence of numbers is an ordered list of those same numbers, sorted from smallest to largest \cite[p.~256]{ross}.
In order to determine each expected rank $E(R_i)$, we will make use of the following well-known fact about order statistics.

\begin{proposition}  Suppose $A$ is a uniformly random $a$-element subset of the integers $\{1,2,\ldots,N\}$.
Let $\{W_1, W_2, \ldots, W_a\}$ denote the order statistics of $A$.  Then
\begin{align*}
E(W_i) \;=\; \frac{i(N+1)}{a+1}.
\end{align*}
\label{expW}
\end{proposition}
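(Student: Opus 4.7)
The plan is to compute $E(W_i)$ directly from its definition, using the combinatorial identity already established in Proposition~\ref{comb-id}. First I would identify the distribution of $W_i$. The event $\{W_i = d\}$ occurs exactly when $A$ contains $d$, together with $i-1$ of the smaller integers $\{1,\ldots,d-1\}$ and $a-i$ of the larger integers $\{d+1,\ldots,N\}$. Hence
$$P(W_i = d) \;=\; \frac{\binom{d-1}{i-1}\binom{N-d}{a-i}}{\binom{N}{a}},$$
for $d \in \{i, i+1, \ldots, N-a+i\}$, with the convention that binomial coefficients with negative upper arguments vanish.

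Next I would write out the expectation and apply the absorption identity $d\binom{d-1}{i-1} = i\binom{d}{i}$ to push the factor of $d$ into the binomial coefficient and shift its parameters:
$$E(W_i) \;=\; \binom{N}{a}^{-1}\sum_{d} d\binom{d-1}{i-1}\binom{N-d}{a-i}
\;=\; \frac{i}{\binom{N}{a}}\sum_{d=0}^{N}\binom{d}{i}\binom{N-d}{a-i},$$
where extending the range of summation to $d \in \{0,\ldots,N\}$ only adds terms that are already zero. Now the sum has exactly the shape of Proposition~\ref{comb-id}, with the proposition's parameters $a$ and $b$ set to $i$ and $a-i$, respectively, giving the value $\binom{N+1}{a+1}$.

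The final step is a direct cancellation of factorials in
$$E(W_i) \;=\; \frac{i\binom{N+1}{a+1}}{\binom{N}{a}} \;=\; i \cdot \frac{(N+1)!/((a+1)!(N-a)!)}{N!/(a!(N-a)!)} \;=\; \frac{i(N+1)}{a+1},$$
which is the claimed formula. As a sanity check, setting $i=1$ and $a=K$ recovers the earlier calculation $E(T)=\frac{N+1}{K+1}$ in equation~(\ref{etanya}), since Tanya's choice is the first order statistic of Dan's $K$-element subset.

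There is really no serious obstacle here; the only place where one could get stuck is in recognizing that the factor of $d$ should be absorbed via $d\binom{d-1}{i-1}=i\binom{d}{i}$ rather than, say, via the dual identity $(N-d)\binom{N-d-1}{a-i-1}=(a-i)\binom{N-d}{a-i}$. The former choice lines the sum up perfectly with Proposition~\ref{comb-id}, whereas the latter would require an extra manipulation. After that, everything is a mechanical factorial computation.
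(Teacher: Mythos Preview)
Your proof is correct, but it takes a genuinely different route from the paper's. The paper argues by symmetry: place $N+1$ equally spaced points on a circle of perimeter $N+1$, choose $a+1$ of them uniformly, observe that the $a+1$ gaps are exchangeable and sum to $N+1$ so each has expectation $\frac{N+1}{a+1}$, then cut the circle at one chosen point to linearize. This yields $E(W_i)=\frac{i(N+1)}{a+1}$ with essentially no computation. Your argument instead writes down the exact distribution of $W_i$, applies the absorption identity $d\binom{d-1}{i-1}=i\binom{d}{i}$, and then invokes Proposition~\ref{comb-id}. What the paper's approach buys is conceptual clarity and an explanation of \emph{why} the answer is linear in $i$; what your approach buys is that it stays entirely inside the combinatorial toolkit already developed in the paper, and it exhibits the earlier computation~(\ref{etanya}) as the special case $i=1$, $a=K$, which is a nice structural observation. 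One small quibble: your remark about ``binomial coefficients with negative upper arguments'' is not quite the right convention to invoke here---the vanishing you actually need is $\binom{n}{k}=0$ when $0\leq n<k$, and the $d=0$ term is harmless because of the leading factor~$d$ (or, after absorption, because $\binom{0}{i}=0$).
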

\begin{proof}
Consider $N+1$ points evenly spaced around a circle of perimeter $N+1$.
Choose a random $(a+1)$-element subset of these points uniformly.  
It follows from the symmetry of the circle that the expected spacing between any two of these chosen points is the same for each consecutive pair.
Since the sum of these spacings returns the perimeter of the circle, each spacing has the expected value $\frac{N+1}{a+1}$.  Now cut the circle open at the location of the first chosen point after some base point (fixed in advance).  The proposition now follows.
\end{proof}

For the special case we need, suppose $A$ is a uniformly random $a$-element subset of the integers $\{1,2,\ldots,N\}$, and let $B$
denote the largest $b$ values of $A$, so that $B = \{W_{1}, \ldots, W_{b-1}, W_b\}$.  
If a value $\hat{B}$ is chosen uniformly from $B$, then
\begin{align*} 
E(\hat{B}) \;=\; \frac{1}{b}\sum_{i=1}^b E(W_{i}) 
&\;=\; \frac{1}{b}\sum_{i=1}^b \frac{i(N+1)}{a+1} \qquad \hbox{(by~ Proposition~\ref{expW})}\\ 
&\;=\; \frac{N+1}{b(a+1)}\sum_{i=1}^b i \\ 
&\;=\; \frac{N+1}{b(a+1)} \cdot \frac{b(b+1)}{2},
\end{align*}
so that (after simplifying)
\begin{align}
E(\hat{B})\;=\; \frac{(N+1)(b+1)}{2(a+1)}.
\label{expJ}
\end{align}

Returning to our dessert selection,
observe that person $i$ is offered $C_{i-1}$ choices and passes along a number $C_i \leq C_{i-1}$ to the next person.  Person $i$ will
always choose her lowest numbered (highest ranked) $C_i$ choices.  After that, what happens from her perspective is uniformly random.  
So the rank $R_i$ of the final outcome, from the perspective of the $i$th person, is a uniformly random choice of one of the lowest
$C_i$ values, taken from the previous set of size $C_{i-1}$.  It follows from the identity~(\ref{expJ}) that
\begin{align}
E(R_i) \;=\;
\frac{(N+1)(C_i+1)}{2(C_{i-1}+1)}.
\label{expR}
\end{align}
We now wish to set all $E(R_i)$ to be equal, subject to the boundary conditions that $C_0=N$ and $C_s = 1$.  
Suppose $E = E(R_i)$ is this constant rank value.  It then follows that
\begin{align*}
E^s &\;=\; E(R_s)E(R_{s-1})\cdots E(R_1) \\[2mm]
&\;=\;  \frac{(N+1)(C_s+1)}{2(C_{s-1}+1)} \frac{(N+1)(C_{s-1}+1)}{2(C_{s-2}+1)}\cdots
\frac{(N+1)(C_1+1)}{2(C_{0}+1)}\\[2mm]
&\;=\;\left(\frac{N+1}{2}\right)^{s} \frac{C_s+1}{C_0+1}
\;=\;\left(\frac{N+1}{2}\right)^{s} \frac{2}{N+1}
\;=\;\left(\frac{N+1}{2}\right)^{s-1},
\end{align*}
so that the common expected rank will be
\begin{align*}
E = \left(\frac{N+1}{2}\right)^{1-\frac{1}{s}}.
\end{align*}

It follows from the identity~(\ref{expR}) that 
the optimal number $C_i$ of choices from the $i$th participant satisfies
\begin{align*}
C_i+1 &\;=\; 
\frac{2E}{N+1} (C_{i-1}+1) 
\;=\; 
\left(\frac{N+1}{2}\right)^{-\frac{1}{s}} (C_{i-1}+1) \;=\; \cdots \\
&\;=\; \left(\frac{N+1}{2}\right)^{-\frac{i}{s}}(C_{0}+1) 
\;=\; \left(\frac{N+1}{2}\right)^{-\frac{i}{s}}(N+1), 
\end{align*}
so that
\begin{align}
C_i \;=\; 2^\frac{i}{s} (N+1)^\frac{s-i}{s} -1.
\label{optC}
\end{align}
Notice that~(\ref{optC}) satisfies $C_0 = N$, the initial number offered, and $C_s = 1$, as required for the final choice.

We have proven the first part of the following theorem.
\begin{theorem}  A collection of $s$ people with random tastes are choosing to share a dessert from a menu with $N$ initial options.
If each $i$th participant passes $C_i$ options to the next person, where $C_i$ is given by the identity~(\ref{optC}),
then the final outcome will have common expected rank
\begin{align}
E(R_i) \;=\;\left(\frac{N+1}{2}\right)^{1-\frac{1}{s}},
\label{optrank}
\end{align}
that is, the same expected rank for each person.
Moreover, the final choice will have the minimal expected total rank, summed over all participants.
\end{theorem}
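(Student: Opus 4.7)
The plan is to exploit the fact that the expected total rank is a sum whose entries multiply to a quantity that does \emph{not} depend on the intermediate choices $C_1,\ldots,C_{s-1}$, and then apply AM--GM, in direct analogy with the proof of Theorem~\ref{opt2}.

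First I would revisit the telescoping identity already computed in the lead-up to the theorem. For any admissible sequence $C_0 = N, C_1, \ldots, C_{s-1}, C_s = 1$, identity~(\ref{expR}) gives
\[
\prod_{i=1}^s E(R_i) \;=\; \prod_{i=1}^s \frac{(N+1)(C_i+1)}{2(C_{i-1}+1)} \;=\; \left(\frac{N+1}{2}\right)^s \cdot \frac{C_s+1}{C_0+1} \;=\; \left(\frac{N+1}{2}\right)^{s-1},
\]
since the internal $(C_i+1)$ factors cancel and only the boundary values $C_0+1 = N+1$ and $C_s+1 = 2$ survive. Thus the product is the \emph{same constant} for every valid choice of intermediate $C_i$'s.

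Next I would apply the AM--GM inequality to the $s$ nonnegative numbers $E(R_1), \ldots, E(R_s)$:
\[
\sum_{i=1}^s E(R_i) \;\geq\; s \left( \prod_{i=1}^s E(R_i) \right)^{1/s} \;=\; s \left(\frac{N+1}{2}\right)^{1-1/s},
\]
with equality if and only if all the $E(R_i)$ are equal. By the first part of the theorem, the choice~(\ref{optC}) achieves precisely this equality, so it attains the lower bound and hence minimizes the expected total rank $\sum_i E(R_i)$.

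There is no real obstacle to overcome; the only thing to double-check is that the telescoping collapses exactly because the boundary conditions $C_0 = N$ and $C_s = 1$ are the same for every candidate scheme, so the fixed product really is a feature of the problem and not of the particular solution. Once that is observed, AM--GM closes the argument in one line, mirroring the two-person case of Theorem~\ref{opt2}.
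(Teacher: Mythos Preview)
Your proposal is correct and follows essentially the same approach as the paper: compute the telescoping product $\prod_i E(R_i) = \left(\tfrac{N+1}{2}\right)^{s-1}$, apply AM--GM to bound the sum from below, and note that equality holds precisely when all $E(R_i)$ coincide, which is achieved by~(\ref{optC}). The only cosmetic difference is that the paper applies AM--GM first and then evaluates the telescoping product, whereas you do it in the reverse order.
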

In other words, the identity~(\ref{optC}) also minimizes the expected sum
\begin{align}
E(R_1 + \cdots + R_s).
\label{ranksum}
\end{align}
\begin{proof} We have already shown that the rank $R_i$ of the final choice (according to the $i$th participant) has the same expected value given by~(\ref{optrank}) for every participant when the protocol given by~(\ref{optC}) is followed. 

To see why the identity~(\ref{optC}) also minimizes the sum~(\ref{ranksum}), observe that
\begin{align} \notag
\frac{1}{s} E(R_1 &+ \cdots + R_s) 
\;=\; \frac{1}{s} \sum_{i=1}^s \frac{(N+1)(C_i+1)}{2(C_{i-1}+1)} \qquad \hbox{by~(\ref{expR})} \\  
&\;\geq\; \prod_{i=1}^s \left( \frac{(N+1)(C_i+1)}{2(C_{i-1}+1)} \right)^\frac{1}{s}\quad \hbox{by the AM-GM inequality \cite[p. 20]{Steele}} \label{AG} \\ \notag
&\;=\; \frac{N+1}{2} \cdot 
\left( \frac{C_1+1}{C_0+1}\cdot \frac{C_2+1}{C_1+1} \cdots \frac{C_s+1}{C_{s-1}+1} \right)^\frac{1}{s} \\ \notag
&\;=\; \frac{N+1}{2} \cdot \left( \frac{C_s+1}{C_0+1} \right)^\frac{1}{s} 
\;=\; \frac{N+1}{2} \cdot \left( \frac{1+1}{N+1} \right)^\frac{1}{s} 
\;=\; \left(\frac{N+1}{2} \right)^{1-\frac{1}{s}}, 
\end{align}
where equality holds in~(\ref{AG}) if and only if the values $E(R_i)$ are all equal, 
leading once again to~(\ref{optC}) and~(\ref{optrank}).
\end{proof}

Note that, if $s = 2$, we have
$$C_1 \;=\;  2^\frac{1}{2} (N+1)^\frac{1}{2} -1 \;=\;  \sqrt{2N+2} -1, $$
in agreement with our solution to the earlier case of two diners sharing a dessert.

Moreover,
$$\lim_{N \rightarrow \infty} \frac{E(R_i)}{N} \;=\; 0,$$ 
so that the expected percentile rank of this choice approaches $100\%$ for everyone, given a sufficiently large menu.

Of course, we again face the issue that the values $C_i$ prescribed by~(\ref{optC}) will almost never be integers.  The values for $C_i$ used in any practical instance will have to be integer approximations, made at some small cost in terms of optimality and fairness.

For example, suppose that Dan, Tanya, and Eric are choosing one dessert to share from a menu of $12$ options. Dan would choose
$C_1 = 2^{1/3}13^{2/3}-1 \approx 6$ desserts,
from which Tanya would then select
$C_2 = 2^{2/3}13^{1/3}-1 \approx 3$, leaving Eric to select one dessert from among these.

This quick procedure avoids the need for long negotiations or complete menu rankings, leaving more time for  cake and mathematics.

{\scriptsize 
T.~Khovanova, Department of Mathematics, MIT,
77 Massachusetts Avenue,
Cambridge, MA 02139, USA} \\
{\footnotesize {\em Email:} {\tt tanyakh@mit.edu}\par}

{\scriptsize 
D.~Klain, Department of Mathematics and Statistics,
UMass Lowell,
Lowell, MA 01854, USA} \\  {\footnotesize 
{\em Email:} {\tt Daniel\_{}Klain@uml.edu}\par
}
\end{document}